\newtheorem{theorem}{Theorem}[section]
\newtheorem{lemma}[theorem]{Lemma}
\theoremstyle{definition}
\newtheorem{example}[theorem]{Example}
\newtheorem*{main theorem}{Main Result}
\newtheorem{problem}[theorem]{Open Problem}
\DeclareMathOperator{\relrho}{\mathnormal{\rho}}
\title[Finitely presented infinite congruence-free monoids]{A countable family of finitely presented infinite congruence-free monoids}
\author{Alan Cain}
\address{Centro de Matematica, Universidade do Porto, Rua do Campo Alegre, 687 4169–007, Porto, PORTUGAL}
\email{\texttt{ajcain@fc.up.pt}}
\author{Victor Maltcev}
\address{Department of Mathematics and Statistics, Sultan Qaboos University, Al-Khodh 123, Muscat, 
Sultanate of OMAN}
\email{\texttt{victor.maltcev@gmail.com}}
\author{Abdullahi Umar}
\address{Department of Mathematics and Statistics, Sultan Qaboos University, Al-Khodh 123, Muscat, 
Sultanate of OMAN}
\email{\texttt{aumarh@squ.edu.om}}
\begin{document}

\begin{abstract}
We prove that monoids
\begin{align*}
\mathrm{Mon}\langle a,b,c,d :\;& a^nb=0,~ac=1,~db=1,~dc=1\\
& dab=1,~da^2b=1,\ldots,~da^{n-1}b=1\rangle
\end{align*}
are congruence-free for all $n\geq 1$. This provides a new countable family of finitely presented congruence-free monoids, bringing one step closer to understanding the Boone--Higman Conjecture. We also provide examples which show that finitely presented congruence-free monoids may have quadratic Dehn function. 
\end{abstract}

\keywords{Boone-Higman Conjecture, congruence-free, finitely presented, rewriting systems}

\maketitle

\section{Introduction and Main Result}
\label{sec:intro}

It is a classical theorem of Rabin (see~\cite{LS}) that every
countable group embeds in a finitely generated simple group. Taking a
deeper look at this question, Boone and Higman proved in~\cite{Boone},
see also~\cite{LS}, that a finitely generated group has soluble word
problem if and only if it embeds in a simple subgroup of a finitely
presented group. This motivated them to raise the question, which is
now still an open problem, referred to as the Boone--Higman
Conjecture:

\begin{problem}
Does every group with soluble word problem embed in a finitely
presented simple group?
\end{problem}

Note that the condition of having soluble word problem is crucial as
every finitely presented simple group necessarily has soluble word
problem. There are plenty of results regarding finitely presented
simple groups, out of which we will mention only the result of
Rover~\cite{Rover} that Grigorchuk groups embed in finitely presented
simple groups, and a result of Scott~\cite{Scott} that there is a
finitely presented simple group with insoluble conjugacy problem. Both
results of Rover and Scott rely on the infinite family of finitely
presented simple groups found by Higman~\cite{Higman}.

Strangely, the analogous questions for semigroups have not been
studied so extensively. The natural counterpart of simplicity for
Semigroup Theory to take is \emph{congruence-freeness}: recall that a
semigroup $S$ is congruence-free if it has only two congruences -- the
identity relation and the relation $S\times S$. First of all, the
analogue of Rabin's Theorem does hold, as was shown by Byleen
in~\cite{Byleen}. Secondly, in a series of paper by
Birget~\cite{Birget1}--\cite{Birget5}, it was developed and studied
the analogue of Higman's countable family.

Recent work by the second author has exhibited a countable family of
finitely presented bisimple $\mathcal{H}$-trivial congruence-free
monoids~\cite{Victor1}, and proved that every finite semigroup embeds
in a finitely presented congruence-free monoid~\cite{Victor2}. These
two papers contained the only known examples of finitely presented
infinite congruence-free monoids that are not groups.

The main goal of this note is to further expand the class of known
examples of finitely presented infinite congruence-free non-group
monoids by proving the following result:
\begin{main theorem}
The monoids
\begin{eqnarray*}
M_n=\mathrm{Mon}\langle a,b,c,d &:& a^nb=0,~ac=1,~db=1,~dc=1\\
&& dab=1,~da^2b=1,\ldots,~da^{n-1}b=1\rangle
\end{eqnarray*}
are congruence-free for all $n\geq 1$.
\end{main theorem}
We obtained this family while trying to embed monoids
$\mathrm{Mon}\langle a,b:a^nb=0\rangle$ in finitely presented
congruence-free monoids. If one increases the exponent of $b$ by $1$,
the question of embedding seems to become much harder, and we have
been unable to resolve it even for the monoid $\mathrm{Mon}\langle
a,b:a^2b^2=0\rangle$. Thus we ask the following questions:

\begin{problem}
Does the monoid $\mathrm{Mon}\langle a,b:a^2b^2=0\rangle$ embed in a
finitely presented congruence-free monoid? If `yes', can one write an
explicit presentation for the monoid to contain $\mathrm{Mon}\langle
a,b:a^2b^2=0\rangle$?
\end{problem}

Before we embark on the proof of our theorem, we provide all the ingredients required for the proof.

\section{Preliminaries}\label{sec:preliminaries}

We will require some information from both semigroup theory and computer science.

The correspondence between normal groups and homomorphisms in group
theory is paralleled by the correspondence between \emph{congruences}
and homomorphisms in semigroup theory. For a semigroup $S$, a binary
relation $\relrho\subseteq S\times S$ is called a \emph{congruence} if
it is an equivalence relation and compatible with multiplication on
the left and right: that is, if $x\relrho y$ for some $x,y\in S$, then
$zx\relrho zy$ and $xz\relrho yz$ for all $z\in S$. The equivalence
classes of $S$ with respect to a congruence $\relrho$ on it, form a
factor-semigroup denoted by $S/\!\relrho$. A subset $I\subseteq S$ is
called an \emph{ideal} of a semigroup $S$, if $IS\cup SI\subseteq
I$. With every ideal $I\subseteq S$ there is associated the so-called
\emph{Rees congruence} $\relrho_I=(I\times I)\cup\Delta$, where
$\Delta$ is the identity relation on $S$. A semigroup is called
\emph{simple} if it has only one ideal, namely the whole semigroup
itself. A semigroup $S$ with zero $0$ is called \emph{$0$-simple} if
it has only two ideals, namely the whole semigroup and
$\{0\}$. Because of the Rees congruences, one easily sees that every
congruence-free semigroup is either simple or $0$-simple. For further
background on semigroups, see~\cite{Howie}.

A \emph{rewriting system} $(A,R)$ comprises a finite alphabet $A$ and
a subset $R\subseteq A^{\ast}\times A^{\ast}$, where $A^{\ast}$ stands
for the free monoid over $A$. Every pair $(l,r)$ from $R$ is called a
\emph{rule} and normally is written as $l\to r$. For $x,y\in A^{\ast}$
we write $x\to y$, if there exist $\alpha,\beta\in A^{\ast}$ and a
rule $l\to r$ from $R$ such that $x=\alpha l\beta$ and $y=\alpha
r\beta$. Denote by $\to^{\ast}$ the transitive reflexive closure of
$\to$. A rewriting system $(A,R)$ is:
\begin{itemize}
\item
\emph{confluent} if for every words $w,x,y\in A^{\ast}$ such that $w\to^{\ast}x$ and $w\to^{\ast}y$, there exists $W\in A^{\ast}$ such that $x\to^{\ast}W$ and $y\to^{\ast}W$;
\item
\emph{terminating} if every infinite derivation $x_0\to x_1\to x_2\to\cdots$ stabilises.
\end{itemize}
Confluent terminating systems, which are also called \emph{complete
  systems}, give a very convenient way of working with finitely
generated monoids: if a monoid is presented by $M=\mathrm{Mon}\langle
A:l_i=r_i\quad i\in I\rangle$ and it turns that the system
$S=(A,\{l_i\to r_i\}_{i\in I})$ is complete, then the elements of $M$
are in bijection with the \emph{normal forms} for $S$, i.e. those
words from $A^{\ast}$ which omit the subwords $l_i$, and for a word
$w\in A^{\ast}$ to find its normal form with respect to $S$, we just
need to apply $\to$ successively to $w$ as many times as we can (this
process must stop by the termination condition) and the result will
always be the same word depending only on $w$. See~\cite{Otto} for
more details on rewriting systems.

Our final notation to fix is as follows: if $A$ is a finite generating
set for a semigroup $S$ and $u$ and $v$ are words over $A$, then by
$u\equiv v$ we will mean that $u$ and $v$ coincide graphically; and by
$u=v$ we will mean that $u$ and $v$ represent the same element of the
semigroup $S$.

\section{Proof of the Main Result}\label{sec:proof}

One easily sees that the presentation for $M_n$ considered as the corresponding rewriting system
is complete. Thus we can use the normal forms for $M_n$ with respect to this complete system.
Note that if a word in the normal form contains $d$, then all the
letters following this distinguished $d$ are only $a$'s and $d$'s. We will use this fact quite frequently in the proof.

Let us start collecting some information about $M_n$:
\begin{lemma}
$M_n$ is $0$-simple.
\end{lemma}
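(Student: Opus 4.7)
The plan is to establish $0$-simplicity by showing that for every non-zero $w \in M_n$ one has $1 \in M_n w M_n$. This immediately gives $M_n w M_n = M_n$, so the only ideals are $\{0\}$ and $M_n$.

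Fix a non-zero $w$ and work with its normal form. By the observation recorded just before the lemma, $w \equiv u v$ where $u \in \{a,b,c\}^{\ast}$ avoids the subwords $ac$ and $a^n b$, while $v$ is either empty or begins with $d$ and consists entirely of $a$'s and $d$'s. I would first peel off the tail $v$ by right multiplication: if $v$ ends in $a$, right-multiply $w$ by $c$ to invoke $ac = 1$; if $v$ ends in $d$, right-multiply by $b$ to invoke $db = 1$. Iterating produces some $y_1 \in M_n$ with $w y_1 = u$, so it suffices to prove $1 \in M_n u M_n$.

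For the word $u$, I would induct on its length. The empty case is trivial, and when $u \equiv a^k$ we obtain $u \cdot c^k = a^k c^k = 1$ by iterating $ac = 1$. Otherwise, let $a^{i_0}$ be the maximal initial run of $a$'s and write $u \equiv a^{i_0} t_1 u'$ with $t_1 \in \{b,c\}$. The normal-form conditions force either $t_1 = c$ with $i_0 = 0$ (since $ac$ is forbidden) or $t_1 = b$ with $0 \leq i_0 \leq n-1$ (since $a^n b$ is forbidden). In each situation one of the relations $dc = 1$, $db = 1$, $dab = 1, \ldots, da^{n-1}b = 1$ gives $d u = u'$. As $u'$ is a suffix of $u$, strictly shorter and still a normal form in $\{a,b,c\}^{\ast}$ avoiding $ac$ and $a^n b$, the induction hypothesis supplies $x', y'$ with $x' u' y' = 1$, whence $(x'd)\, u\, y' = 1$.

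The main obstacle I anticipate is making this case distinction on the leading block rigorous: one must check that the avoidance conditions really do force exactly the constraints on $i_0$ that make the matching cancellation rule $d a^{i_0} t_1 = 1$ available among the defining relations. The supplementary verifications (that suffixes of $uv$-type normal forms remain of the same type, and that right-multiplication by $b$ or $c$ really does shorten $v$ by one letter in each reduction step) are routine but should be stated carefully to close the induction.
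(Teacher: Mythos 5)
Your proposal is correct and follows essentially the same strategy as the paper: reduce a non-zero normal form to $1$ by using the defining relations as one-sided cancellations, with right multiplication by $c$'s and $b$'s to remove the $\{a,d\}$-tail and left multiplication by $d$ to consume the leading block $a^{i_0}b$ or $c$ of the $d$-free part. The only cosmetic difference is that you strip the entire $d$-tail in a single preprocessing pass before inducting, whereas the paper removes one block $da^k$ per induction step via $wc^kab=w'$; the case analysis on the leading block and the appeal to the relations $dc=1$, $da^kb=1$ ($0\le k\le n-1$) match the paper's argument.
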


\begin{proof}
To prove the lemma, it suffices to show that if $w$ is a non-zero word over $\{a,b,c,d\}^{\ast}$ in its normal form, then $M_nwM_n=M_n$. We will prove it by induction on the length $|w|$ of $w$. The base case $|w|=0$, i.e. $w=1$, is trivial. Now let us do the transition $(<|w|)\mapsto |w|$.

Let us first assume that $w$ contains letters $d$. We can take the last letter $d$, after which by the above remark there can follow only letters $a$. Hence $w$ is representable as $w\equiv w'da^k$ for some $k\geq 0$. Then $wc^kab=w'dab=w'$ and we may apply induction.

So, let now $w\in\{a,b,c\}^{\ast}$. If $w$ starts with $b$ or $c$, then by premultiplying $w$ with $d$, we can cancel out that corresponding $b$ or $c$, and then use induction. So, we may assume that $w$ starts with $a$. If $w$ is a power of $a$, then by the relation $ac=1$ we immediately get $M_n=M_nwM_n$. So, again because of the relation $ac=1$, we may assume that $w\equiv a^kbw'$ for some $k\geq 1$. Because of the relation $a^nb=0$, we see that $k<n$. But then $da^kb=1$ and so $dw=w'$ and we are done by induction.
\end{proof}

In order to prove that $M_n$ is congruence-free, we proceed by
induction on $|u|+|v|$ proving that if $\relrho$ is a congruence on
$M_n$ and $u\relrho v$ for some distinct normal form words $u$ and $v$
over $\{a,b,c,d\}$, then $\relrho=M_n\times M_n$.

Let us first check the base case -- without loss we may assume that
$|u|=0$ and $|v|=1$. Then we have that $u=1$ and
$v\in\{a,b,c,d,0\}$. Having $1\relrho 0$ immediately implies
$\relrho=M_n\times M_n$. If $1\relrho a$, then $a^{n-1}b\relrho
a^nb=0$ and so $0\relrho da^{n-1}b=1$. If $1\relrho b$ or $1\relrho
c$, then $1=db=dc\relrho d$ and so $ab\relrho dab=1$, implying
$0=a^nb\relrho a^{n-1}$, which yields $0\relrho a^{n-1}c^{n-1}=1$. So,
in any of the cases we obtain $1\relrho 0$ and so the base case holds.

Now we do the step $(<|u|+|v|)\mapsto (|u|+|v|)$.

Let us first sort out the case when both $u$ and $v$ contain $d$,
i.e. $u\equiv Uda^p$ and $v\equiv Vda^q$ for some $p,q\geq 0$. If
$p=q$, then $U\not\equiv V$ and $U=Uda^pc^{p+1}\relrho
Vda^qc^{q+1}=V$ and we may use induction. So, let, say, $p>q$.
Then $Uda^{p-q}\relrho Vd$ and so $0=Uda^{(p-q)+n-1}b\relrho Vda^{n-1}b=V$, hence we may use
$0$-simplicity to conclude that $0\relrho 1$ and consequently $\relrho=M_n\times M_n$.

Now let us deal with the case when only one of $u$ and $v$ contains
$d$: say $u\equiv Uda^p$ and $v\in\{a,b,c\}^{\ast}$ for some $p\geq
0$. Then $Ud=uc^p\relrho vc^p\in\{a,b,c\}^{\ast}$. Let $v'$ be the
normal form for $vc^p$. If $v'$ has $a$ as the last letter, then
$U=Uda^{n-1}b\relrho v'a^{n-1}b=0$ and we may use $0$-simplicity. So,
we may assume that $v'$ does not end with $a$. Since $vc^p$ does not
contain $d$'s and rewriting does not introduce letters $d$, and in
normal forms $c$ cannot follow $a$, one sees now that if $v'$ contains
$a$'s, then each such letter $a$ in $v'$ is a part of a subword $a^kb$
with $1\leq k\leq n-1$. Using this fact and the relations $dc=1$ and
$da^{k}b=1$ for all $1\leq k\leq n-1$, there exists an appropriate
$m\geq 0$ such that $d^mv'=1$. Then $d^mUd\relrho 1$. In particular,
recalling that $db=1$, $d$ is invertible in $M_n/\!\relrho$, and so
$ab$ is invertible in $M_n/\!\relrho$, which yields from $a^nb=0$ that
$a^n=0$ in $M_n/\!\relrho$. Then $1=a^nc^n=0$ in $M_n/\!\relrho$ and
so $\relrho=M_n\times M_n$.

So, from now on we may assume that $u,v\in\{a,b,c\}^{\ast}$.

Let us first deal with the case when one of $u$ and $v$ is empty,
say $v\equiv 1$. Then $u\not\equiv 1$. If $u$ starts with $b$ or
$c$, then, since we already know from the presentation that $b$ and $c$ are left invertible in $M_n$, respectively $b$ or $c$ is invertible in $M_n/\!\relrho$, and
then $d$ is invertible in $M_n/\!\relrho$, yielding $1=0$ in $M_n/\!\relrho$ as
above. So assume, $u\equiv a^kU$, $k\geq 1$ and such that either $U\equiv
1$ or $U$ starts with $b$. If $U\equiv 1$, then $a$ is invertible
in $M_n/\!\relrho$, which yields $b=0$ in $M_n/\!\relrho$ and so $1=db=0$ in
$M_n/\!\relrho$. So let $U\not\equiv 1$. Then $0=a^{n-1}u\relrho a^{n-1}$ and so
$1=a^{n-1}c^{n-1}=0$ in $M_n/\!\relrho$.

Let now $u\not\equiv 1$ and $v\not\equiv 1$. Assume first that at
least one of $u$ and $v$ starts with $a$, say, $u\equiv a^kU$ for some $k\geq 1$. We may also assume that $k$ is a maximal possible number with $u\equiv a^kU$. Then
either $U\equiv 1$, or $U$ starts with $b$.
\begin{itemize}
\item
$U\equiv 1$. Then $va^{n-1}b\relrho a^{k+n-1}b=0$. If $va^{n-1}b\neq 0$, we may use
$0$-simplicity. If $va^{n-1}b=0$, then $v$ must end with $a$. But then
$v\equiv Va$ and so $a^{k-1}=uc\relrho vc=V$ and since $a^{k-1}\not\equiv V$, we
may use induction.
\item
$U\equiv bU_1$ for some
$U_1\in\{a,b,c\}^{\ast}$. Then $k\leq n-1$ and $u\equiv a^kbU_1\relrho v$. Then $a^{n-k}v\relrho
0$. Again, if $a^{n-k}v\neq 0$, then we may use $0$-simplicity. If
$a^{n-k}v=0$, then $v$ must start with $a^kb$: $v\equiv a^kbV$ and then
$U_1=du\relrho dv=V$ and again since $U_1\not\equiv V$, we may use
induction.
\end{itemize}
Finally, let neither of $u$ and $v$ start with $a$. If $u$ and $v$
start with the same letter $x$ (which will be either $b$ or $c$) and
$u\equiv xU$ and $v\equiv xV$, then $U\not\equiv V$ and $U=du\relrho
dv=V$ and we may use induction. So, without loss we will assume that
$u\equiv bU$ and $v\equiv cV$. Then $U=dbU\relrho dcV=V$, so we may
assume that $U\equiv V$ (otherwise use induction). But also
$U=dabU\relrho dacV=dV=dU$. We have that $U\neq dU$ and $|U|+|dU|\leq
2|U|+1<2|U|+2=|u|+|v|$, and thus may use induction.

\section{Dehn function}

All the so far known examples of finitely presented congruence-free
monoids -- from the Main Result, and from~\cite{Victor1}
and~\cite{Victor2} -- admit finite complete length-decreasing
rewriting systems, and thus have linear Dehn functions. The following
example shows that finitely presented congruence-free monoid may have
quadratic Dehn function.

\begin{example}
The monoid $M$ presented by the finite complete system
\begin{eqnarray*}
ab &\to& ba\\
cbad &\to& 1\\
cb^2 &\to& 1\\
a^2d &\to& 1\\
cad &\to& 0\\
cbd &\to& 0\\
cd &\to& 1
\end{eqnarray*}
is congruence-free and has quadratic Dehn function.
\end{example}

\begin{proof}
That the monoid has quadratic Dehn function is immediate.
It is a routine to check that $M$ is $0$-simple.

We proceed by induction on $|u|+|v|$ proving that if $u$ and
$v$ are in their normal forms and $u\relrho v$ for some congruence
$\relrho$ on $M$, then $\relrho=M\times M$. The base case is obvious.
Now we do the step $(<|u|+|v|)\mapsto (|u|+|v|)$.

First deal with the case when both $u$ and $v$ contain $c$. Then $u$
and $v$ decompose as $u\equiv Ucb^pa^q$ and $v\equiv Vcb^ra^s$ where
$p,q,r,s\geq 0$. Since $a$ is right cancellative, we may assume that
either $q=0$ or $s=0$. Without loss we will assume that $s=0$,
i.e. $v\equiv Vcb^r$.

First we consider the case when $q=0$. If $p=r$, then since
$p,r\in\{0,1\}$, by $cd=1$ and $cbad=1$, we may use induction. If
$p=1$ and $r=0$, i.e. $Ucb\relrho Vc$ and so $U=Ucbad\relrho Vcad=0$
and we may use $0$-simplicity. The case when $p=0$ and $r=1$ is dealt
similarly.

Thus we may assume that $q>0$. To recall: $Ucb^pa^q\relrho Vcb^r$. We
will go through four cases depending whether $p$ and $r$ are $0$ or
$1$:
\begin{itemize}
\item $p=r=0$: $Uca^q\relrho Vc$. If $q=1$, then $Uca\relrho Vc$ and
  so $0=Ucad\relrho Vcd=V$ and we may use $0$-simplicity. So, we may
  assume that $q\geq 2$. Then $Uca^{q-2}=Uca^qd\relrho Vcd=V$ and so
  we may assume that $V\equiv Uca^{q-2}$. Thus, initially we had
  $Uca^q\relrho Uca^{q-2}c$. Then $Ua^q=Uca^qb^2\relrho
  Uca^{q-2}cb^2=Uca^{q-2}$ and now we may use induction.
\item
$p=0$ and $r=1$: $Uca^q\relrho Vcb$. Then $Ucba^q\relrho V$ and so we may assume that $V\equiv Ucba^q$. Thus initially we had $Uca^q\relrho Ucba^qcb$, and postmultiplying this with $ad$, we obtain $Uca^{q-1}\relrho Ucba^q$ and so $Uc\relrho Ucba$ and now we may use induction.
\item
$p=1$ and $r=0$: $Ucba^q\relrho Vc$. Then $Ucba^{q-1}=Ucba^q\cdot ad\relrho Vcad=0$ and we may use $0$-simplicity.
\item
$p=r=1$: $Ucba^q\relrho Vcb$. Then $0\neq Ucba^qd\relrho Vcbd=0$ and we may use $0$-simplicity.
\end{itemize}

Thus, from now on we may assume that $u$ and $v$ do not both contain $c$'s. Let us deal with the case when one of $u$ and $v$ contain $c$'s. Say, $u\equiv Ucb^pa^q$ and $v\in\{a,b,d\}^{\ast}$. Then $Ucb^p=u(ad)^q\relrho v(ad)^q$. Recall that $p\in\{0,1\}$. Now we have $0=Ucbd\relrho v(ad)^qb^{1-p}d\neq 0$ and we may use $0$-simplicity.

Therefore, we may assume that none of $u$ and $v$ contains $c$. By symmetry, we may assume that none of $u$ and $v$ contains $d$. Since $ab=ba$, $a$ is right cancellative, and $b$ is left cancellative, essentially we are left to deal with two cases:
\begin{itemize}
\item
$u=b^p$ and $v=a^q$. Without loss we will assume that $q>0$. Now, $b^{2p}\relrho a^{2q}$ and so $1\relrho c^pa^{2q}$. This means that $a$ is invertible in $M/\!\relrho$, and so $d$ is invertible in $M/\!\relrho$. Thus from $cad=0$, we have that $c=0$ in $M/\!\relrho$ and so $0=cb^2=1$ in $M/\!\relrho$.
\item
$u=b^pa^q$ and $v=1$. Again without loss we may assume
that $q>0$, hence $a$ is invertible in $M/\!\relrho$, and as in the previous case we deduce that $0=1$ in $M/\!\relrho$.
\end{itemize} 
\end{proof}

\begin{problem}
Characterise the Dehn functions of finitely presented congruence-free monoids.
\end{problem}

\section{Concluding remarks}

All the examples of finitely presented congruence-free monoids we have
met so far -- from this paper and from~\cite{Victor1}
and~\cite{Victor2} -- are not only simple or $0$-simple, but in fact
bisimple or $0$-bisimple. We have not managed to find an example of a
finitely presented congruence-free but not bisimple monoid and so
finish the paper with the following question:

\begin{problem}
Does there exist a finitely presented congruence-free non-bisimple monoid?
\end{problem}

\section*{Acknowledgements}

This paper was written while the visit of the first author to Sultan Qaboos University. We would like to thank SQU for hospitality.


\end{document}